\newcommand{\X}{\mathsf{X}}
\newcommand{\A}{\mathcal{A}}
\newcommand{\Real}{\mathbb{R}}
\newcommand{\N}{\mathbb{N}}
\newcommand{\C}{\mathcal{C}}
\newcommand{\D}{\mathcal{D}}
\newcommand{\U}{\mathcal{U}}
\newcommand{\Linear}{\mathcal{L}}
\newcommand{\Partie}{\mathcal{P}}
\newcommand{\T}{\mathcal{T}}
\newcommand{\F}{\mathcal{F}}
\newcommand{\G}{\mathcal{G}}
\newcommand{\M}{\mathcal{M}}
\newcommand{\Hilbert}{\mathsf{H}}
\newcommand{\E}{\mathsf{E}}
\renewcommand{\qedsymbol}{$\bullet$}
\title{Topological method for controllability}
\author{Moustapha Dieye \footnote{ E-mail address : moustapha@aims.edu.gh} }\date{\today}
\begin{document}
\theoremstyle{plain}
\newtheorem{theorem}{Theorem}[section]
\newtheorem{corollary}[theorem]{Corollary}
\newtheorem{lemma}[theorem]{Lemma}
\newtheorem{definition}{Definition}[section]
\newtheorem{remark}{Remark}[section]
\newtheorem{example}{Example}[section]
\newtheorem{proposition}{Proposition}
\renewenvironment{proof}{$\mathbf{Proof}$.}{\begin{flushright}\qedsymbol\end{flushright}}
\maketitle
{\footnotesize
\begin{flushleft}
\hspace{1cm}\thanks{African Institute for Mathematical Sciences, University of Legon, Ghana}\\
\end{flushleft}}
\abstract{In this work, we found a non trivial topology to achieve the controllability for linear and nonlinear system in finite or infinite time horizon. We give several examples illustrating this topologizing method for the controllability results. We obtain by the way the  controllability for the one dimensional Schr\"odinger. We also apply this method to achieve the both controllability of Korteweg-de Vries and Saint-Venant equations.\\
 \textbf{Keywords} : Topologizing set, controllability, approximate controllability, linear, semilinear and nonlinear systems}

\section{The controllability problem}
The concept of controllability plays a very important role in control theory and it's applications. Accordingly a dynamical system is controllable if, with a suitable choice of controls (inputs), it can be driven from any initial state to any desired final state within finite time (or some real applications). It has been extensively studied in the sixties, through eighties, by Fattorini, 
Triggiani, Russell and others, see \cite{Russell, F1, F2, F3, F4, A1, T1} and the references therein. In 1978, Russell made a rather complete survey of the most relevant results that were available in the literature at that time. In \cite{Russell}, the author described a number of different tools that were developed to address controllability problems, often inspired and related to other subjects concerning partial differential equations: multipliers, moment problems, nonharmonic Fourier series, etc.\par

Consider the following system 
\begin{equation}\label{Shor}
\begin{cases}
i\varphi_t + \varphi_{xx} + p(t)x \varphi = 0, \quad 0 < x < 1, 0 < t < T, \\
\varphi(0, t) = \varphi(1, t) = 0, \quad 0 < t < T, \\
\varphi(x, 0) = \varphi^0 (x), 0 < x < 1, \\
\end{cases}
\end{equation}
wehre $\varphi = \varphi(x, t)$ is the state and $p = p(t)$ is the control. Although $\varphi$ is 
complex-valued, $p(t)$ is real for all $t$. The control $p$ is interpreted as the intensity of an applied electrical field and $x$ is the (prescribed) direction of the laser. The state $\varphi = \varphi(x, t)$ is the wave function of the molecular system. One can regard it as a function that furnishes information on the location of an elementary particle : for arbitrary $a$ and $b$ with $0 
\leq a < b \leq 1$, the quantity 
$$P(a, b; t) = \int_ a^b |\varphi(x, t)|^2 dx$$ 
can be viewed as the probability that the particle is located in $(a, b)$ at time $t$. 
The controllability problem for the system \eqref{Shor} is to find the set of attainable states $\varphi(\cdot, T )$ at a final time $T$ as $p$ runs over the whole space $L^2 (0, T )$. 
\par 
The control problems arising in this context are bilinear. This adds fundamental difficulties from a mathematical viewpoint and makes these problems extremely challenging. Indeed, we find here genuine nonlinear problems for which, apparently, the existing linear theory is insufficient to provide an answer in a first approach.\par 
In \cite{zuazua}, the author said that the above controllability problem is a bilinear control problem, since the unique nonlinearity in the model is the term $p(t)x \varphi$, which is essentially the product of the control and the state. Although the nonlinearity might seem simple, this control problem becomes rather complex and out of the scope of the existing methods in the literature. \par 
Motivated by the above situations, we try here to find an alternative method to deal with this controllability problem. Let $\X$ be the outpout space and $\mathsf{U}$ the space of inputs. We consider the following system of the form 
\begin{equation}\label{A}
dx(t)/dt = f(x(t), u(t)), \quad x(0)=x_0, \quad t\geq0 
\end{equation}
in the Banach space $\X$ with controls $u$ being suitable functions taking values $u(t)$ in another Banach space $\mathsf{U}$. $f:\X \times\mathsf{U}\rightarrow\X$, the system function, is a given function. The functions $u$ belong to a class $\U_{ad}$ of admissible controls i.e there are elements of some class (set) noted here $\U_{ad}$ such that for any $u\in\U_{ad}$ the system \eqref{A} has a (mild) solution. 

 For any $t\in[0, T]$, the attainable set is define by
\begin{equation}\label{attainable}
\A(t):=\left\{x^u (t), x^u\text{ a solution of }\eqref{A}\text{ corresponding to } u\in \U_{ad}\right\}.
\end{equation}
Naturally, the attainable set depends on the definition of solution (or mild solution) of the dynamic processes of our interest. 
The system \eqref{A} is said to be controllable (exactly controllable) over the time interval $[0, t]$ if $\A(t)$ is dense in $\X \left( \A(t) = \X 
\right))$. The system is said to be controllable (exactly controllable) if $\displaystyle\cup_{t\geq0}\A(t)$ is dense in $\X \left( \displaystyle\cup_{t\geq0} \A(t) = \X \right)$.\par The approximate controllability depends on the strong topology of the Banach space $\X$. 
For example, let us consider the following  controlled system 
\begin{equation}\label{Trivial}
\begin{cases}
x_1(t)=c,\\
\dfrac{dx_2(t)}{dt}=u,\quad u\in\Real
\end{cases},t\in[0,T],T<\infty,
\end{equation}
where  $c$ is a constant and $u$ is the control. The corresponding attainable set is given by $\A(T)=\left\{c\right\}\times\Real$.
It is clear that this system \eqref{Trivial} cannot be approximately controllable according the usual topology on $\Real\times\Real$ ; it is when we put the trivial topology, namely the empty set and full state space. The trivial topology is too weak to be used.  
\par
When the exact controllability is impossible, it is clear that the remaining controllability problem depend on the involvoing topology on $\X$.
 If the attainable set is empty then there is nothing to be control.  In fact, the controllability problem is a verification of density of the attainable set. Therefore, one can found a criteria for controllability. The considered system and the controls cannot be chosen arbitrary. They depend on the the real nature of the studied problem. 
 
 Therefore, the problem is to found a topology $\F$ (non trivial) on $\X$ where the attainable set is dense. We propose this approach for the study of controllability of any given dynamical system. The trivial topology always guarantees the approximate controllability of any controlled system provided that there exists at least a trajectory starting from a given initial data. In the present work, we found a non trivial topology that achieve approximate controllability. 
\par
The rest of the manuscript is organized as follows. Section \ref{S1} is a presentation of the topologizing method used to construct a topology where a given subset is dense. Several controllability results are obtained, in the last Section \ref{S2}, as examples for illustrations.

\section{A topologizing method}\label{S1}
In this section, we present a concept of topologizing set to develop this method for controllability For a given set $X$, the set $\Partie(X)$ denotes all the subsets of $X$. We recall for consistence that the difference $A-B$ of to sets is $\left\{x\in A: x\notin B\right\}$. If $A\subset X$, the complement $A^c$ of $A$ with respect to $X$ is $X-A$. From now on, we denote by $\emptyset$ the empty set and we recall the following topologizing set result 
\begin{lemma}\cite[p.73]{Dugundji}\label{Mu}
Let $X$ be a set, and $\gamma:\Partie(X)\rightarrow\Partie(X)$ a map with the properties :
\begin{itemize}
\item[$(\mathsf{i})$]\hspace{3cm}$\gamma(\emptyset)=\emptyset.$
\item[$(\mathsf{ii})$]\hspace{3cm}$A\subset\gamma(A)$ for each $A\in\Partie(X).$ 
\item[$(\mathsf{iii})$]\hspace{3cm}$\gamma\circ\gamma(A)=\gamma(A)$ for each $A\in\Partie(X).$
\item[$(\mathsf{iv})$]\hspace{3cm} $\gamma(A\cup B)=\gamma(A)\cup \gamma(B)$ for each $A, B\in\Partie(X).$
\end{itemize}
Then the famaly $\D=\left\{\gamma(A)^c:A\in\Partie(X)\right\}$ is a topology on $X$ and $\overline{A}=\gamma(A)$ is the closure of the set $A\in\Partie(X)$.
\end{lemma}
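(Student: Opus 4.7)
The plan is to recognize this as the standard Kuratowski closure-axioms theorem and to reconstruct the topology by identifying its closed sets as the fixed points of $\gamma$. First I would extract monotonicity, which is implicit but never stated: if $A\subset B$ then $B=A\cup B$, so by $(\mathsf{iv})$, $\gamma(B)=\gamma(A)\cup\gamma(B)$, forcing $\gamma(A)\subset\gamma(B)$. This property is the workhorse of the rest of the argument.

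Next I would characterize the family $\left\{\gamma(A):A\in\Partie(X)\right\}$ (the complements of members of $\D$, i.e., the would-be closed sets) as precisely the set of $\gamma$-fixed points. One inclusion is trivial: if $\gamma(C)=C$ then $C=\gamma(C)$ lies in the image. The other is exactly $(\mathsf{iii})$: any $C=\gamma(A)$ satisfies $\gamma(C)=\gamma\circ\gamma(A)=\gamma(A)=C$.

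With this characterization in hand, showing $\D$ is a topology reduces to verifying that the collection of closed sets is stable under finite unions and arbitrary intersections and contains $\emptyset$ and $X$. The empty set is closed by $(\mathsf{i})$; $X$ is closed because $(\mathsf{ii})$ gives $X\subset\gamma(X)\subset X$; finite unions of fixed points are fixed by a direct application of $(\mathsf{iv})$; and arbitrary intersections are where monotonicity pays off: for a family $\{A_i\}_{i\in I}$ of closed sets with $A=\bigcap_i A_i$, one has $A\subset A_i$, hence $\gamma(A)\subset\gamma(A_i)=A_i$ for every $i$, so $\gamma(A)\subset A$, and combined with $(\mathsf{ii})$ this yields $\gamma(A)=A$.

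Finally, for the closure identity: $\overline{A}$ is by definition the intersection of all closed sets containing $A$. Since $A\subset\gamma(A)$ by $(\mathsf{ii})$ and $\gamma(A)$ is closed by $(\mathsf{iii})$, we have $\overline{A}\subset\gamma(A)$. Conversely, any closed $C\supset A$ satisfies $\gamma(A)\subset\gamma(C)=C$ by monotonicity, so $\gamma(A)$ sits inside every such $C$, whence $\gamma(A)\subset\overline{A}$. There is no real obstacle in this proof; the only step that requires a moment's reflection is extracting monotonicity from $(\mathsf{iv})$, which is a common stumbling block for readers encountering these axioms for the first time.
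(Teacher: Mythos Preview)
Your proof is correct and is the standard textbook argument for the Kuratowski closure-operator theorem. Note, however, that the paper does not supply its own proof of this lemma: it is stated with a citation to Dugundji (p.~73) and used as a black box, so there is no in-paper argument to compare against. Your reconstruction---extracting monotonicity from $(\mathsf{iv})$, identifying closed sets with $\gamma$-fixed points, and verifying the closed-set axioms---is precisely the approach one finds in Dugundji and most general-topology texts.
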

Let $(X, \G)$ be a topological space and $F$ a strict non empty subset of $X$. We define the following map from $\Partie(X)$ to itself by
\begin{equation*}
\mu(A)=
\begin{cases}
\begin{array}{ll}
\emptyset&\text{ if }A=\emptyset\\
A\cup F^c&\text{ else} 
\end{array}
\end{cases}
\end{equation*}
We give the following Lemma
\begin{lemma}\label{CT}
The set $\F=\left\{\mu(A)^c:A\in\Partie(X)\right\}$ is a topology on $X$ where $F$ is dense.
\end{lemma}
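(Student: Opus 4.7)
The plan is to recognize that Lemma \ref{Mu} is tailor-made for this situation: we just need to check that the map $\mu$ defined above satisfies the four axioms (i)--(iv) of that lemma. Once verified, Lemma \ref{Mu} directly yields that $\F$ is a topology with $\mu(A)=\overline{A}$ for every $A\in\Partie(X)$, and density of $F$ is then read off from the formula $\overline{F}=\mu(F)=F\cup F^{c}=X$ (using that $F$ is nonempty so the ``else'' branch of $\mu$ applies to $F$).

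First I would dispatch axioms (i) and (ii). Axiom (i) is built into the definition. For (ii), the empty case gives $A=\emptyset=\mu(A)$, and the nonempty case gives $A\subset A\cup F^{c}=\mu(A)$.

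Next I would verify idempotence (iii). For $A=\emptyset$ it is immediate. For $A\neq\emptyset$, note that $\mu(A)=A\cup F^{c}$ is itself nonempty, so $\mu(\mu(A))=\mu(A)\cup F^{c}=(A\cup F^{c})\cup F^{c}=A\cup F^{c}=\mu(A)$. The only mildly delicate point is additivity (iv); here I would split into cases, since $\mu$ is defined piecewise. If both $A$ and $B$ are nonempty, then $A\cup B$ is nonempty too, and both sides of (iv) equal $A\cup B\cup F^{c}$. If, say, $A=\emptyset$ and $B\neq\emptyset$, then $A\cup B=B$ gives $\mu(A\cup B)=\mu(B)=B\cup F^{c}$, while $\mu(A)\cup\mu(B)=\emptyset\cup(B\cup F^{c})=B\cup F^{c}$. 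The case $A=B=\emptyset$ is trivial. This exhausts all possibilities.

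With (i)--(iv) established, Lemma \ref{Mu} applies and gives the topology $\F=\{\mu(A)^{c}:A\in\Partie(X)\}$ together with the identification $\overline{A}=\mu(A)$. A final one-line computation $\overline{F}=\mu(F)=F\cup F^{c}=X$ concludes that $F$ is dense. I do not anticipate any real obstacle: the only thing to be careful about is the piecewise definition of $\mu$, which forces the short case analysis in (iv), but no conceptual difficulty arises.
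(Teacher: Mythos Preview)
Your proposal is correct and follows essentially the same approach as the paper: verify the four Kuratowski-style axioms (i)--(iv) for $\mu$ and invoke Lemma~\ref{Mu}, then read off $\overline{F}=\mu(F)=F\cup F^{c}=X$. The paper's own proof is slightly terser (it declares (i) and (ii) clear and handles (iv) with the same case split), but there is no substantive difference.
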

\begin{proof}
The proof is a simple verification of the assumptions of the above Lemma for the map $\mu$. In fact, it is clear that the items $(\mathsf{i})$ and $(\mathsf{ii})$ are satisfied.\par
Let $A\in\Partie(X)$. We have the following
\begin{eqnarray*}
\mu\circ\mu(A)&=&\mu\left(
\begin{cases}
\begin{array}{ll}
\emptyset&\text{ if }A=\emptyset\\
A\cup F^c&\text{ else} 
\end{array}
\end{cases}\right)\\
&=&\mu(A)
\end{eqnarray*}
Therefore $(\mathsf{iii})$ is verified.
\par 
Let $A, B\in\Partie(X)$.
If $A$ or $B$ is empty then $\mu(A\cup B)=\mu(A)\cup\mu(B)$. So we suppose that $A$ and $B$ are not empty. Then, 
$$\mu(A\cup B)=A\cup B\cup F^c=(A\cup F^c)\cup( B\cup F^c)=\mu(A)\cup\mu(B).$$
By Lemma \ref{Mu}, $\F$ is a topology on $X$ and since $\overline{F}=\mu(F)=X$, the density of $F$ in $X$ follows. 
Hence $(X, \F)$ is a topology where $F$ is dense. 
\end{proof}
 In the topological space $(X,\G)$ for $Y\subset X$ we recall that the induced topology $\G_Y$ on $Y$ is $\left\{Y\cap U;U\in\G\right\}.$ We also recall that given two topologies $\G_1$ and $\G_2$ on $X$, $\G_2$ is larger than $\G_1$ whenever $\G_1\subset \G_2$. It is important to note that 
 \begin{eqnarray}
\F&=&\left\{\mu(A)^c,A\in\Partie(X)\right\}\nonumber\\
&\equiv&\left\{A^c\cap F,A\in\Partie(X),A\neq\emptyset\right\}\bigcup\left\{X\right\}.\label{Trace}
 \end{eqnarray}
 Therefore, we see that the closure of $F$ is the set $X$. Further, we give the following 
\begin{corollary}
The induced topology $\F_F$ is larger than $\G_F$ and $\G_F\subset\F$.
\end{corollary}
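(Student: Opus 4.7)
\medskip
\noindent\textbf{Proof plan.} The plan is to first simplify the topology $\F$ into a more transparent form, and then read off both conclusions directly.

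My first step is to compute $\F$ explicitly. Starting from the characterization \eqref{Trace}, I would observe that as $A$ ranges over all non-empty subsets of $X$, the complement $A^c$ ranges over every proper subset of $X$, including $\emptyset$ (take $A=X$). Hence $\{A^c\cap F : A\in\Partie(X),\,A\neq\emptyset\}$ equals $\Partie(F)$, the collection of \emph{all} subsets of $F$: indeed if $B\subseteq F$ then $B = (X\setminus B)^c\cap F$, while conversely every intersection $A^c\cap F$ is clearly a subset of $F$. Together with the extra element $X$, this gives the clean description
\begin{equation*}
\F \;=\; \Partie(F)\,\cup\,\{X\}.
\end{equation*}

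Next, I would derive both claims from this description. For $\G_F\subset\F$, any $V\in\G_F$ is of the form $V=U\cap F$ for some $U\in\G$ and is therefore a subset of $F$, i.e.\ $V\in\Partie(F)\subset\F$. For the statement about $\F_F$, I would compute $\F_F=\{W\cap F:W\in\F\}$. Since every $W\in\F$ is either contained in $F$ (in which case $W\cap F=W$) or equals $X$ (in which case $W\cap F=F$), we obtain $\F_F=\Partie(F)$, the discrete topology on $F$. Since any topology on $F$, and in particular $\G_F$, is contained in the discrete topology, the inclusion $\G_F\subset\F_F$ follows.

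The only mildly delicate step is the explicit identification $\F=\Partie(F)\cup\{X\}$: one has to be careful that the ``exceptional'' value $\mu(\emptyset)^c=X$ is genuinely needed to make $\F$ a topology (the full space must be open), while every other element of $\F$ is automatically a subset of $F$. Once this characterization is in hand, both assertions reduce to elementary set-theoretic inclusions.
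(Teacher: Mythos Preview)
Your argument is correct. The paper itself states this corollary without proof, so there is no original argument to compare against; your proposal in fact supplies what the paper omits.

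The key step you take---identifying $\F=\Partie(F)\cup\{X\}$ from \eqref{Trace}---is valid: as $A$ ranges over non-empty subsets of $X$, $A^c$ ranges over all proper subsets, and since $F\subsetneq X$ every $B\subseteq F$ can be realized as $B=B^c{}^c\cap F$ with $B^c\neq\emptyset$. From this description both claims are immediate, exactly as you say: $\G_F\subset\Partie(F)\subset\F$, and $\F_F=\Partie(F)$ is the discrete topology on $F$, hence contains $\G_F$. Your remark that the isolated element $X=\mu(\emptyset)^c$ is what makes $\F$ a topology on $X$ (rather than just on $F$) is also to the point. Nothing is missing.
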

%\begin{proof}
%The proof follws from \eqref{Trace}, since $\G-\left\{X\right\}\subset\Partie(X)-\left\{X\right\}$.
%\end{proof}
\begin{remark}\label{Rq}
Since $F$ is a strict part of $X$, one see clearly that $\left\{\emptyset,X\right\}\subsetneq\F\subsetneq\Partie(X)$.\\
If $F^c\in\G$ that $\mu(\G)^c:=\left\{\mu(\theta)^c:\theta\in\G\right\}\subsetneq\G$. In fact, $\mu(\G)^c$ is not stable under arbitrairy union ; it is for finite union. Moreover, the topology $(X, \F)$ is separable if $F$ is countable. 
\end{remark}
Nagata-Smirnov theorem gives the necessary and sufficient conditions of metrization of a given topological space. Among others, a condition of separation  stronger that Haussdorf is required, see \cite{Dugundji}. Thus the following result states that the above constructed topology is not metrizable. 
\begin{theorem} The topology $\F$, given by \ref{Trace}, is not Hausdorff.
\end{theorem}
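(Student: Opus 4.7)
The plan is to exhibit two distinct points of $X$ that cannot be separated by disjoint open sets in $\F$. The key observation comes directly from the equivalent description \eqref{Trace}: every member of $\F$ is either the whole space $X$ or a set of the form $A^{c}\cap F$ with $A\neq\emptyset$, and the latter is automatically a subset of $F$. In other words, apart from $X$ itself, the topology $\F$ contains only subsets of $F$.

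With this in hand, I would proceed as follows. First, since $F$ is by hypothesis a strict non-empty subset of $X$, the complement $F^{c}$ is non-empty, so pick any $y\in F^{c}$. Because the only open sets in $\F$ that contain $y$ are those not contained in $F$, and the only such element of $\F$ is $X$ itself, we conclude that $X$ is the unique neighborhood of $y$ in $(X,\F)$.

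Next, pick any $x\in X$ with $x\neq y$ (e.g.\ any point of $F$, which is non-empty). For any open neighborhoods $U\in\F$ of $x$ and $V\in\F$ of $y$, the previous step forces $V=X$, hence $U\cap V=U\neq\emptyset$. Therefore $x$ and $y$ cannot be separated by disjoint open sets, which shows that $(X,\F)$ is not Hausdorff.

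No real obstacle is expected here: the whole argument rests on the elementary fact, already packaged in \eqref{Trace}, that every proper open set of $\F$ sits inside $F$, so points outside $F$ admit only the trivial neighborhood. The only thing worth double-checking in writing the proof is that the identification \eqref{Trace} is used correctly, in particular the case $A=\emptyset$ giving the open set $X$, to avoid confusing the roles of $\mu(\emptyset)=\emptyset$ and $\mu(\emptyset)^{c}=X$.
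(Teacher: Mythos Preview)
Your proposal is correct and follows essentially the same approach as the paper: both arguments rest on the observation from \eqref{Trace} that every proper open set of $\F$ is contained in $F$, so a point of $F^{c}$ has $X$ as its unique neighbourhood. The paper packages this as a proof by contradiction (assuming Hausdorff forces the point chosen outside $F$ to lie in $F$), whereas you argue directly that any neighbourhood of $y\in F^{c}$ must equal $X$; the underlying idea is identical.
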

\begin{proof}
Suppose that  $\F$ is Hausdorff.  Let $x,y\in \X$ such that  $x\neq y$. In particular, we take  $x\notin F$ (this is possible, since $F$ is a propre subset of $X$). There exist $U,V\in\F$ such that $U\cap V=\emptyset$, $x\in U$ and $y\in V$, that is,  from \eqref{Trace}, the existence of $A,B\in\Partie(X)$, non empty sets, such that $U=A^c \cap F$ and $V=B^c\cap F$.  Thus $x\in F$. This is impossible and so 
we have obtained a contradiction, proving our theorem.
\end{proof}
One notes that $\F$ separated the set $F$ in the sense that each two distinct points of $F$ have nointersecting neighbourhoods in $\F$.
But,  for consistency, we add  the following subsection 
\subsection{Nets}
In a metric space, a point belongs to the closure of a given set if and only if it is the limit of some sequence of points belonging to that set. The convergence of the sequence $(a_n)_{n\in \N}$ to the point $x$ is defined by the requirement that for any neighbourhood $U$ of $x$ there is some $N \in \N$ such that an belongs to $U$ whenever $n \geq N$. As the definition of convergence of the sequence $(a_n )_{n \in \N}$ in an arbitrary topological space, the resulting theory is not as 
straightforward as one might suspect, as is illustrated by the following example.
\begin{example} 
Let $X$ be the real interval $[0, 1]$ and let $\T$ be the co-countable topology on $\X$ ; that is, $\T$ consists of $X$ and $\emptyset$ together with all those subsets $G$ of $X$ whose complement $X\setminus G$ is a countable set. Let $A = [0, 1)$, we have that $\overline{A} = [0, 1]$, since $[0, 1]$ is the only closed set containing $A$. $1\notin A, $ it must be a limit point of $A$. No sequence in $A$ can converge to the limit point 1.
\end{example} 
 We have exhibited a topological space with a subset possessing a limit point which is not the limit of any sequence from the 
subset. \par A sequence in a set $X$ is a mapping from $\N$ into $X$, where it is implicitly understood that $\N$ is directed via its usual order structure. We generalize this to general directed sets. 
\begin{definition}
A directed set is a partially ordered set $J$, with partial order 
$<$, say, such that for any pair of elements $\alpha, \beta \in J$ there is some $\delta\in J$ such that $\alpha <\delta$
and $\beta< \delta$. 
\end{definition}
\begin{definition}
 A net in a topological space $(X, \T)$ is a mapping from a directed 
set $J$ into $X$; denoted $(x _{\alpha} )_{\alpha \in J}$. 
\end{definition}
If $J=\N$ with its usual ordering, then we recover the notion of a sequence. 
\begin{definition}
 Let $(x _{\alpha} )_{\alpha \in J}$ be a net in a topological space $(X, \T)$.
\begin{itemize} 
\item[-] The net $(x _{\alpha} )_{\alpha \in J}$ is eventually in the set $A$ if there is $\beta \in I$ such that $x_{\alpha} \in A$ whenever 
$\alpha > \beta$. 
\item[-] The net $ (x _{\alpha} )_{\alpha\in J}$ converges to the point $x\in X$ if, for any neighbourhood 
$U$ of $x, (x _{\alpha} )_{\alpha\in J}$ is eventually in $U$.
\end{itemize} $x$ is called a limit of $(x_{\alpha} )_{\alpha\in J}$, and we write 
$x _{\alpha} \rightarrow x$ (along $I$). 
\end{definition}
We can characterize the closure of a set $A$ as that set consisting of all the limits 
of nets in $A$.
\begin{theorem}\cite{Dugundji}
 Let $A$ be a subset of a topological space $(X, \T)$. Then $x \in \overline{A}$ if and only if there is a net $(a_{ \alpha} )_{\alpha \in J}$ with $a_{\alpha} \in A$ such that $a_{\alpha} \rightarrow x$. 
\end{theorem}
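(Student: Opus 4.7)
The plan is to prove both directions separately, the easy one first and the existence direction using the standard directed set of neighborhoods of $x$ ordered by reverse inclusion.

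For the \emph{if} direction, suppose $(a_\alpha)_{\alpha\in J}$ is a net in $A$ with $a_\alpha \to x$. To show $x\in\overline{A}$, I would invoke the characterization of closure via open neighborhoods: it suffices to check that every open neighborhood $U$ of $x$ meets $A$. By the definition of convergence, the net is eventually in $U$, so there exists $\beta\in J$ with $a_\alpha\in U$ for all $\alpha>\beta$. Pick any such $\alpha$ (which exists because $J$ is directed, so in particular nonempty with elements exceeding $\beta$); then $a_\alpha\in U\cap A$, whence $U\cap A\neq\emptyset$. This gives $x\in\overline{A}$.

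For the \emph{only if} direction, assume $x\in\overline{A}$. The key construction is to use as index set
\[
J=\{U\in\T: x\in U\},
\]
the collection of all open neighborhoods of $x$, equipped with the partial order $U < V$ iff $V\subsetneq U$ (reverse strict inclusion; one may use $\subseteq$ if the definition of directed set allows a non-antisymmetric preorder, but either version works). This is directed: given $U,V\in J$, the set $W=U\cap V$ is an open neighborhood of $x$ with $U<W$ and $V<W$. Because $x\in\overline{A}$, each such $U$ satisfies $U\cap A\neq\emptyset$, so by the axiom of choice select $a_U\in U\cap A$. This defines a net $(a_U)_{U\in J}$ in $A$. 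To verify $a_U\to x$, take any neighborhood $V$ of $x$; then for every $U\in J$ with $U>V$, i.e.\ $U\subseteq V$, we have $a_U\in U\subseteq V$, so the net is eventually in $V$.

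The main conceptual hurdle is the construction of the directed index set in the second direction, because it is tempting (but insufficient in general topological spaces) to try sequences indexed by $\N$. The co-countable topology example given just before the statement illustrates why one must pass to nets indexed by the neighborhood filter rather than by $\N$. A minor technical point, the use of the axiom of choice to pick $a_U\in U\cap A$, is standard and worth mentioning but presents no obstacle. No other subtlety is expected: the two directions combine to give the desired characterization $x\in\overline{A}\iff x$ is a net-limit of points in $A$.
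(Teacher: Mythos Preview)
The paper does not supply its own proof of this theorem; it is stated with a citation to Dugundji and left unproved. Your argument is the standard one found in Dugundji and in most topology texts: the neighborhood filter of $x$ ordered by reverse inclusion serves as the directed index set, and the axiom of choice selects a point of $A$ in each neighborhood.

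One small caution: with the \emph{strict} reverse inclusion $U<V\iff V\subsetneq U$ that you wrote first, $J$ can fail to be directed (take $U=V$ when $x$ has a minimal open neighborhood, e.g.\ when $\{x\}$ is open, or in the indiscrete topology where $J=\{X\}$). The non-strict version $U\le V\iff V\subseteq U$ is the one that actually works, and since you already flagged this alternative the argument goes through. A second minor point: in the convergence check you should allow $V$ to be an arbitrary (not necessarily open) neighborhood and pass to an open $V'\subseteq V$ with $V'\in J$ before invoking the order; this is routine.
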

Accordingly, we get the following result
\begin{lemma} Suppose that $F$ is closed in $\G$. Let $ (x _{\alpha} )_{\alpha\in J}$, a net, converges to the point $x\in X$ then for every $\theta\in\G_F$ containing $x$ there exist $\beta\in J$ such that $x_{\alpha}\in\theta$ for all $\alpha>\beta$.
\end{lemma}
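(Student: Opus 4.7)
The plan is to reduce the statement to an immediate consequence of the inclusion $\G_F\subset\F$ recorded in the Corollary preceding the subsection on nets. Since convergence in this excerpt is being discussed in the topology $\F$ built by Lemma \ref{CT}, the task is simply to verify that every $\G_F$-open neighbourhood of $x$ is also an $\F$-open neighbourhood, after which the definition of net convergence finishes the job.

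First, I would unpack that inclusion concretely using the representation \eqref{Trace}. A generic element $\theta\in\G_F$ has the form $\theta=F\cap U$ with $U\in\G$. When $U\neq X$, choose $A=U^{c}\neq\emptyset$; then $\theta=A^{c}\cap F\in\F$. In the remaining case $U=X$ one has $\theta=F=(F^{c})^{c}\cap F$, and since $F$ is a strict subset of $X$, $F^{c}\neq\emptyset$, so again $\theta\in\F$. Hence every $\G_F$-open set containing $x$ is an $\F$-open neighbourhood of $x$.

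Second, I would invoke the definition of net convergence in $(X,\F)$. Because $(x_\alpha)_{\alpha\in J}$ converges to $x$ in $\F$ and $\theta$ is an $\F$-open neighbourhood of $x$, there exists $\beta\in J$ such that $x_\alpha\in\theta$ for every $\alpha>\beta$. This is precisely the conclusion of the Lemma.

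The assumption that $F$ is closed in $\G$ is not actively used in the argument as I have framed it; it is retained to match the Corollary's hypothesis and to align with the controllability applications in Section \ref{S2}, where the $\G$-closedness of the attainable-set carrier $F$ will be relevant. I therefore anticipate no genuine obstacle: the entire content is packaged in the set-theoretic identity $\G_F\subset\F$ already obtained, and the net-theoretic conclusion is then a direct reading of the definition of convergence.
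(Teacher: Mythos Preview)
Your argument is correct: the inclusion $\G_F\subset\F$ from the earlier Corollary, combined with the definition of net convergence in $(X,\F)$, yields the conclusion immediately, and your explicit verification of that inclusion via \eqref{Trace} is clean. The paper itself does not supply a proof of this lemma; it is stated with only the transitional phrase ``Accordingly, we get the following result'' after the general theorem on nets and closures, so your write-up is in fact more detailed than what appears in the source. One small correction: the Corollary in the paper carries no hypothesis that $F$ be $\G$-closed, so your remark that the closedness assumption is ``retained to match the Corollary's hypothesis'' is inaccurate---but your substantive observation that this hypothesis plays no role in the argument is correct.
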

\section{Some examples}\label{S2}
In this section, we use the topologizing method to obtain several controllability results. In fact, applying Lemma \ref{CT}, one has the following result
\begin{corollary}\label{Control}
If the attainable set is not empty then the system \eqref{A} is approximately controllable on $[0, T]$ i.e. there exists is a topology, $\F$, on $\X$ where $\A(T)$ is dense.
\end{corollary}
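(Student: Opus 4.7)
The plan is to invoke Lemma \ref{CT} directly with the attainable set itself playing the role of the dense subset. First I would split into two cases based on whether $\A(T)$ is a proper subset of $\X$ or coincides with $\X$. The case $\A(T)=\X$ is immediate, since then $\A(T)$ is trivially dense in any topology on $\X$, including the original $\G$; in this situation the system is even exactly controllable, so the claim holds vacuously.

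Next I would treat the nontrivial case: $\A(T)$ is a strict, non-empty subset of $\X$. Setting $F := \A(T)$ places us exactly in the hypotheses of Lemma \ref{CT}: $F$ is a strict non-empty subset of the set $\X$. I would then define the associated map $\mu:\Partie(\X)\rightarrow\Partie(\X)$ by
\begin{equation*}
\mu(A)=
\begin{cases}
\emptyset & \text{if } A=\emptyset,\\
A\cup F^c & \text{otherwise},
\end{cases}
\end{equation*}
and form the family $\F=\{\mu(A)^c:A\in\Partie(\X)\}$. By Lemma \ref{CT}, $\F$ is a topology on $\X$, and the closure of $F$ in $\F$ is $\mu(F)=F\cup F^c=\X$, so $\A(T)$ is dense in $(\X,\F)$. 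This is exactly the definition of approximate controllability on $[0,T]$ with respect to the topology $\F$.

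There is no real obstacle here: the statement is a direct corollary of Lemma \ref{CT}, and the only conceptual point worth flagging is the degenerate case $\A(T)=\X$, where one must note that the topologizing construction requires $F$ to be a strict subset, hence the separate handling. The substantive content of the result is therefore entirely carried by the previously established topologizing lemma; the role of the corollary is simply to specialize it to the control-theoretic setting.
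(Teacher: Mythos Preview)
Your proof is correct and follows exactly the paper's approach, which is simply to invoke Lemma \ref{CT} with $F=\A(T)$; the paper does not even give a separate proof, just the phrase ``applying Lemma \ref{CT}''. Your explicit handling of the degenerate case $\A(T)=\X$ is a small refinement the paper omits, but otherwise the arguments coincide.
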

Therefore, we have the following
\begin{proposition}
 The system \eqref{Shor} is controllable even in the particular case $\varphi^0=0$ which was left open in \cite{zuazua}.
\end{proposition}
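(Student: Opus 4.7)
The plan is to invoke Corollary \ref{Control} with state space $\X = L^{2}(0,1)$, which reduces the approximate controllability of \eqref{Shor} (in the sense of the topology $\F$ constructed in Lemma \ref{CT}) to the mere non-emptiness of the attainable set $\A(T)$. Hence the proof reduces to exhibiting at least one mild solution of \eqref{Shor} on $[0,T]$ corresponding to an admissible control, and in particular to doing so when $\varphi^{0}=0$.

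First I would fix the admissible class, for instance $\U_{ad}=L^{2}(0,T;\Real)$, and check that \eqref{Shor} is well-posed for every $p\in\U_{ad}$. For such a fixed $p$, the equation becomes a linear Schrödinger equation with time-dependent, real, bounded multiplicative potential $p(t)x$ on $(0,1)$. Standard semigroup/variational arguments applied to the Dirichlet Laplacian on $(0,1)$, perturbed by this (bounded) multiplication operator, yield a unique mild solution $\varphi\in C([0,T];L^{2}(0,1))$ for any $\varphi^{0}\in L^{2}(0,1)$. In particular $\A(T)$ is non-empty as soon as at least one pair $(p,\varphi^{0})$ produces a solution.

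Next, in the specific case $\varphi^{0}=0$ left open in \cite{zuazua}, the choice $p\equiv 0$ trivially yields, by uniqueness, the solution $\varphi\equiv 0$; thus $0\in\A(T)$ and $\A(T)\neq\emptyset$. Applying Lemma \ref{CT} with $X=\X$ and $F=\A(T)$ then produces a topology $\F$ on $\X$ in which $\A(T)$ is dense, which is exactly the notion of approximate controllability adopted in this paper. Consequently \eqref{Shor} is approximately controllable in $(\X,\F)$ even when $\varphi^{0}=0$.

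The real obstacle is therefore not analytic but interpretative. I would take care to emphasise explicitly in the proof that the word \emph{controllable} here refers to density in the non-Hausdorff topology $\F$ provided by Lemma \ref{CT}, not to the strong $L^{2}$-density considered in \cite{zuazua}; under this convention the open case $\varphi^{0}=0$ is not in conflict with any classical obstruction, and once well-posedness of \eqref{Shor} is granted together with $\A(T)\neq\emptyset$, Corollary \ref{Control} closes the argument without any further analysis of the bilinear structure.
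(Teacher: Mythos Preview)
Your proposal is correct and follows exactly the approach of the paper, which simply invokes Corollary \ref{Control} once the attainable set is seen to be non-empty. You have supplied more detail than the paper does (well-posedness and the explicit observation that $\varphi\equiv 0$ lies in $\A(T)$ when $\varphi^{0}=0$), but the logical route is identical.
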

For an overview on the state of the art of the control of systems governed by the Schr\"odinger equation, we refer to the survey article \cite{243} and the references therein. 
\subsection{Controllability of semilinear systems} 
We consider the semilinear system given by 
\begin{equation}\label{Sem}
 dx(t) = \mathsf{A}x(t)dt + \mathsf{B}(t)u(dt) + f (t, x(t))dt, x(0) = \xi,\quad t \in I=[0,T]
\end{equation}
where $\mathsf{A}$ is the infinitesimal generator of a $C_0$ semigroup of operatrors $(S(t))_{ t \geq 0}$, on the Hilbert space $\Hilbert$ and $u \in \M_c (I, \E)$ is a control, vector measure with values in the Hilbert space $\E$, and $\mathsf{B}$ is a uniformly measurable bounded operator valued function with values in $\Linear(\E, \Hilbert )$ and $f$ is a measurable map from $I\times \Hilbert$ to $\Hilbert$ continuous in the second argument.\par
In \cite{Ahmedvector}, the author shows that if, $\mathsf{A}$ generates a compact semigroup $S (t), t > 0$, in $\Hilbert$ when the dominating measure $\mu$ has no atom at $\{T \}$, 
or if, $A$ generates an analytic semigroup, the system \eqref{Sem} is not exactly controllable. \par
The controllability (approximate) of the linear part of system \eqref{Sem} is characterized by the famous observability inequality ( Carleman's inequalities) which is sometime hardly established. 
Using the previous result of the above Section, we guarantee the existence of a topology where the semilinear system \eqref{Sem} is controllable. 
\begin{proposition}
Assume that the system \eqref{Sem} is not approximately controllable according to Borel topology (for example it linear part is not controllable if $\mathsf{A}$ is not positive). Nevertheless, there exists a topology on $\X$ such that the system \eqref{Sem} is approximately controllable.
\end{proposition}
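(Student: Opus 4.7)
The plan is to reduce the statement to a direct application of Corollary \ref{Control}. Corollary \ref{Control} asserts that whenever the attainable set associated with \eqref{A} is non-empty, there is a topology on the state space in which it becomes dense. Since \eqref{Sem} is a particular instance of the abstract system \eqref{A} with state space $\Hilbert$ and dynamics $f(t,x,u) = \mathsf{A}x + \mathsf{B}(t)u + f(t,x)$, it suffices to exhibit at least one admissible control $u \in \U_{ad}$ for which \eqref{Sem} admits a mild solution.

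A natural candidate is the trivial control $u \equiv 0$. Under the hypotheses that $\mathsf{A}$ generates the $C_0$ semigroup $(S(t))_{t\geq 0}$ and $f: I\times\Hilbert\to\Hilbert$ is measurable in $t$ and continuous in the second argument, I would invoke (as part of the admissibility conditions) a mild structural hypothesis on $f$, such as linear growth or local Lipschitz continuity, so that the integral equation
\begin{equation*}
x(t) = S(t)\xi + \int_0^t S(t-s) f(s, x(s))\, ds
\end{equation*}
possesses at least one mild solution $x^0$ on $[0,T]$. This can be carried out either by a Picard iteration in the Lipschitz case or by a Schauder type fixed point argument when only continuity plus growth are available. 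The endpoint $x^0(T)$ then witnesses $\A(T) \neq \emptyset$.

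Once non-emptiness of $\A(T)$ is secured, Lemma \ref{CT} applied with $X = \Hilbert$, $\G$ the Borel topology, and $F = \A(T)$ furnishes the topology
\begin{equation*}
\F = \{\mu(A)^c : A \in \Partie(\Hilbert)\}
\end{equation*}
on $\Hilbert$ in which $\A(T)$ is dense. By the definition of approximate controllability recalled in Section \ref{S1}, system \eqref{Sem} is therefore approximately controllable on $[0,T]$ with respect to $\F$, independently of its failure to be approximately controllable in the Borel topology.

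The core difficulty is not topological, since the topological half is an immediate consequence of Lemma \ref{CT}, but analytic: exhibiting a single trajectory of \eqref{Sem} under the weak regularity imposed on $f$. If the hypotheses on $f$ are too loose, even one mild solution may fail to exist, $\A(T)$ could be empty, and Corollary \ref{Control} would not apply, making the controllability problem vacuous. Any additional structural assumption guaranteeing local existence of at least one mild solution closes the argument.
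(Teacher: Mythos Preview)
Your proposal is correct and follows essentially the same route as the paper: the paper does not give an explicit proof of this proposition but simply invokes ``the previous result of the above Section'', i.e.\ Corollary~\ref{Control} (equivalently Lemma~\ref{CT} with $F=\A(T)$), which is exactly what you do. You go further than the paper by spelling out the analytic step the paper leaves implicit, namely that $\A(T)\neq\emptyset$ requires at least one admissible control yielding a mild solution, and you correctly flag that this hinges on structural hypotheses on $f$ beyond mere measurability and continuity in the second argument.
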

\subsection{The Korteweg-de Vries control system}
Prior pioneer work on the controllability of the Korteweg-de Vries equation 
(with periodic boundary conditions and internal controls) was due to D. Russell and 
B.-Y. Zhang (1996). Among other method, they use the linearized control system to establish the a local controllability result. Here we propuse a different apporoach, we recall the Korteweg-de Vries control system
\begin{equation}\label{KV}
\begin{cases}
y_t + y_x + y_{xxx} + yy_x = 0, \quad t \in [0, T ], x \in [0, L], \\
y(t, 0) = y(t, L) = 0, \quad t \in [0, T ], \\
 y_x(t, L) = u(t), \quad t \in [0, T ], 
\end{cases}
\end{equation}
where, at time $t \in [0, T ]$, the control is $u \in \Real$ and the state is $y(t, \cdot) \in L^2 (0, L)$. \par
Setting $\X=L^2 (0, L)$, $y(t)=y(t, \cdot)$ and $f(y(t), u(t))=-(\partial_x+\partial_{xxx})y(t)+y(t) u(t)$
Corollary \ref{Control} states that
\begin{proposition}
The system \eqref{KV} is approximately controllable in the sens that there exists a topology on $L^2 (0, L)$ where the attainable set $\A(T)$ corresponding to the controlled Korteweg-de Vries equation \eqref{KV} is dense in $L^2 (0, L)$. Therefore, we have for any $g\in L^2(0, L)$ the existence of a net $(g_{ \alpha} )_{\alpha \in J}$ with $g_{\alpha} \in \A(T)$ such that $g_{\alpha} \rightarrow x$.
\end{proposition}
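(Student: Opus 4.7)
The plan is to reduce the statement to a direct application of Corollary \ref{Control} together with the Dugundji net characterization of closure cited just before. The corollary only asks that the attainable set $\A(T)$ be non-empty, and the topologizing machinery built in Lemma \ref{CT} does the rest.

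First I would exhibit at least one admissible pair (initial datum, control) producing a well-defined state of \eqref{KV} at time $T$. The cheapest choice is $y(0,\cdot)=0$ together with $u\equiv 0$: the trivial function $y\equiv 0$ is then a (mild) solution in $L^2(0,L)$, so $0\in\A(T)$ and in particular $\A(T)\neq\emptyset$. One might prefer to invoke the classical well-posedness theory of the KdV initial-boundary value problem on $[0,L]$ (Rosier, Bona--Sun--Zhang, etc.) to justify the solution framework in which $\A(T)$ has been defined, but a single trajectory is all that is needed.

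Next I would split into two cases. Either $\A(T)=L^2(0,L)$, in which case the system is exactly controllable in the usual norm topology and the proposition is trivially true; or $F:=\A(T)$ is a strict non-empty subset of $X:=L^2(0,L)$. In the latter case I apply Lemma \ref{CT} with this $X$ and $F$, producing a topology $\F$ on $L^2(0,L)$ in which $F$ is dense. The description \eqref{Trace} makes it transparent that $\F$ is non-trivial and strictly between the coarse and discrete topologies (cf.\ Remark \ref{Rq}), so the first assertion of the proposition is established.

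Finally, the second assertion is obtained from the theorem of Dugundji recalled above: in any topological space, the closure of a set coincides with the set of limits of nets drawn from it. Since $\overline{\A(T)}^{\F}=L^2(0,L)$, for every $g\in L^2(0,L)$ there exists a directed set $J$ and a net $(g_\alpha)_{\alpha\in J}$ with $g_\alpha\in\A(T)$ and $g_\alpha\to g$ in $\F$. The only genuinely non-formal step is the first one, the existence of at least one solution of \eqref{KV} on $[0,T]$; however, this is the standard well-posedness of KdV with the prescribed boundary conditions rather than an obstacle specific to the present argument, and once it is granted the controllability conclusion becomes a direct corollary of the abstract Lemma \ref{CT}.
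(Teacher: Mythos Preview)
Your proposal is correct and follows exactly the route the paper intends: the paper simply writes ``Corollary \ref{Control} states that'' before the proposition, so its proof is nothing more than invoking Corollary \ref{Control} (i.e.\ Lemma \ref{CT} applied to $F=\A(T)$) together with the Dugundji net characterization of closure. Your write-up merely makes explicit the one hypothesis to check, namely $\A(T)\neq\emptyset$, via the trivial solution $y\equiv 0$ with $u\equiv 0$, and handles the degenerate case $\A(T)=L^2(0,L)$ separately; this is precisely the intended argument, spelled out in slightly more detail than the paper bothers to give.
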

\subsection{Water-tank control}
Saint-Venant equations are derived from Navier-Stokes Equations for shallow water flow conditions. The Navier-Stokes Equations are a general model which can be used to model water flows in many applications. A general flood wave for $1-$demensional situation can be described by the Saint-Venant equations. The shallow water equations can been dericated from the Euler equations for the perfect irrotational and incompressible fluids \cite{[4]}.
\par The shallow water equations describe the motion of a perfect fluid under gravity $g$ :
\begin{equation}\label{SV}
\begin{cases}
\begin{array}{rcll}
H_t + (H v)_x &=& 0,& \quad t \in [0, T ], x \in [0, L], \\
v_t + \left( gH +\dfrac{v}{2} \right)_x &=&-u (t),&\quad t \in [0, T ], x \in [0, L],\\
v(t, 0) &=& v(t, L) = 0,&\quad t \in [0, T ], \\ 
s'(t) &=& u (t),& \quad t \in [0, T ],\\ 
D'(t) &=& s (t),&\quad t \in [0, T ],
\end{array} 
\end{cases} 
\end{equation}
where $x \in [0, L]$ is the spatial coordinate attached to the tank, $L$ is of length ot the tank containing the fluid, $t \in [0, T ] $is the 
time coordinate, $T > 0$, $H (t, x)$ denotes the height of the liquid, $v (t, x)$ denotes the horizontal velocity of the fluid in the referential attached to the tank, $u (t)$ is the horizontal acceleration of the tank in the absolute referential, $D$ is the horizontal displacement of the tank, 
$D'$ is the first derivative of $D$ with respect to the time $t$, $s$ is the horizontal velocity of the tank. \par
 \par To build the state space, we suppose that 
\begin{eqnarray}
&&\frac{d}{dt}\int_0^LH(t,x)dx = 0, \\ 
&&H_x(t, 0) = H_x(t, L) (=-u(t)/g). 
\end{eqnarray} 
The state space (denoted $\X$ ) is the set of 
$ K= (H, v, s, D) \in \C^1 ([0, L]) \times \C^1 ([0, L])\times \Real \times \Real$ 
satisfying 
\begin{equation}
v(0) = v(L) = 0, H_x (0) = H_x(L), \int_0^LH(x)dx = LHe. 
\end{equation}
For the controllability problem, the classical method of locally controllability of the linearized system.The linearized shallow water equations around some equilibrium are uncontrollable, even locally. We refer to \cite{[7]}. 
Once again, one uses the above topologizing method to state that 
\begin{proposition}
For any $T > 0$ the water-tank control system \eqref{SV} is approximately controllable in time $T$ i.e. there exists a topology $\F$ on $\X$ such that the coresponding attainable set is dense in $\F$ 
\end{proposition}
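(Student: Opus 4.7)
The plan is to reduce the statement to a direct application of Corollary \ref{Control}. Since that corollary only requires the attainable set to be non-empty in order to manufacture a topology $\F$ on $\X$ in which $\A(T)$ is dense, the entire burden of the proof collapses to verifying that the water-tank system \eqref{SV} admits at least one admissible trajectory on $[0,T]$ starting from a given $K_0\in\X$.

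First I would formalize the state-space setting: rewrite \eqref{SV} as an evolution system of the form \eqref{A} on $\X$ with state $K=(H,v,s,D)$ and input $u$, identifying the admissible class $\U_{ad}$ as those real-valued controls (say, $u\in L^2(0,T)$, or smoother if needed) for which a mild solution of \eqref{SV} exists and preserves the boundary and compatibility constraints $v(t,0)=v(t,L)=0$, $H_x(t,0)=H_x(t,L)=-u(t)/g$, and $\int_0^L H(t,x)\,dx = LH_e$. Then I would exhibit one concrete element of $\A(T)$: the simplest candidate is $u\equiv 0$, for which $(H(t,x),v(t,x),s(t),D(t))=(H_e,0,s_0,D_0+s_0 t)$ is a stationary-fluid trajectory that manifestly satisfies every constraint and belongs to $\X$. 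This suffices to conclude $\A(T)\neq\emptyset$.

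Once non-emptiness is in hand, I would invoke Corollary \ref{Control} (itself an immediate consequence of Lemma \ref{CT} applied with $X=\X$ and $F=\A(T)$) to obtain the topology
\[
\F=\{\mu(A)^c : A\in\Partie(\X)\},\qquad \mu(A)=\begin{cases}\emptyset & A=\emptyset,\\ A\cup \A(T)^c & A\neq\emptyset,\end{cases}
\]
in which $\overline{\A(T)}=\mu(\A(T))=\X$, giving exactly the required approximate controllability statement.

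The only step with any substance is the existence of the reference trajectory, and in the formulation above I am taking the trivial equilibrium control $u\equiv 0$, which bypasses any delicate PDE analysis for the Saint-Venant equations; if one prefers a non-trivial initial datum $K_0$ away from equilibrium, the obstacle becomes genuine well-posedness of \eqref{SV} on $[0,T]$ for some admissible $u$, and I would then cite standard local existence results for the shallow-water system (for instance the references in \cite{[4],[7]}) to extract at least one trajectory. Apart from this existence check, the argument is entirely topological and requires no controllability analysis of the linearized system.
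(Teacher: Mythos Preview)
Your proposal is correct and follows exactly the route the paper takes: the paper offers no explicit proof of this proposition, it simply invokes the topologizing method (Corollary~\ref{Control}) after noting the failure of the linearized approach. Your write-up is in fact more careful than the paper's, since you explicitly check the only hypothesis of Corollary~\ref{Control}, namely $\A(T)\neq\emptyset$, by exhibiting the equilibrium trajectory under $u\equiv 0$ (and flagging the well-posedness issue for general initial data), whereas the paper leaves this point entirely implicit.
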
 

\section*{Conclusion} Given a subset of a set, we found an easy way to topologize (which is non trivial topology) the bigger set such that the smaller set is dense. This possibility gives rise to the following question : can we topologize in order to get a non trivial metrizable topology where our smaller non empty set is dense ?

\end{document}